\theoremstyle{plain}
\newtheorem{theorem}{Theorem}
\newtheorem*{theorem*}{Theorem}
\newtheorem{lemma}[theorem]{Lemma}
\theoremstyle{remark}
\newcommand{\B}{\mathcal{B}(H)}
\newcommand{\F}{\mathcal{F}(H)}
\newcommand{\K}{\mathcal{C}(H)}
\newcommand\tr{\operatorname{Tr}}
\newcommand{\C}{\mathbb{C}}
\newcommand{\N}{\mathbb{N}}
\begin{document}

\title[]{On 2-local *-automorphisms and 2-local isometries of $\B$}

\author{Lajos Moln\'ar}
\address{University of Szeged, Interdisciplinary Excellence Centre, Bolyai Institute,
H-6720 Szeged, Aradi v\'ertan\'uk tere 1.,
Hungary, and
Budapest University of Technology and Economics,  Institute of Mathematics,
H-1521 Budapest, Hungary}
\email{molnarl@math.u-szeged.hu, molnarl@math.bme.hu}
\urladdr{http://www.math.u-szeged.hu/\~{}molnarl}

\thanks{This paper was written while the author was a visiting researcher at the Alfr\'ed R\'enyi Institute of Mathematics (Hungarian Academy of Sciences). His research was supported by
the Ministry of Human Capacities, Hungary grant 20391-3/2018/FEKUSTRAT and by the National Research, Development and Innovation Office NKFIH, Grant No. K115383. The author expresses his thanks to L\'aszl\'o Szilas for his useful comments and technical help during this research.}

\begin{abstract}
It is an important result of \v Semrl which states that every 2-local automorphism of the full operator algebra over a separable Hilbert space is necessarily an automorphism. In this paper we strengthen that result quite substantially for *-automorphisms. Indeed, we show that one can compress the defining two equations of 2-local *-automorphisms into one single equation, hence weakening the requirement significantly, but still keeping essentially the conclusion that such maps are necessarily *-automorphisms.
\end{abstract}

\subjclass[2010]{47B48, 47B49, 46L40, 46B04.}
\keywords{*-automorphisms, surjective isometries, 2-local maps, algebra of Hilbert space operators}
\maketitle

\section{Introduction and Statements of the Results}

The concept of 2-local automorphisms of algebras was introduced by \v Semrl in the paper \cite{Sem} as follows. For an algebra $\mathcal A$ the map $\theta: \mathcal A\to \mathcal A$ (which is not assumed to be linear) is called a 2-local automorphism of $\mathcal A$ if for every $A,B\in \mathcal A$ there is an algebra automorphism $\theta_{A,B}$ of $\mathcal A$ (depending on $A,B$) such that
\begin{equation}\label{E:main}
\theta(A)=\theta_{A,B}(A) \quad \text{and} \quad\theta(B)=\theta_{A,B}(B).
\end{equation}
\v Semrl's motivation to introduce this concept originated from Kowalski and Slodkowski's version of the famous Gleason-Kahane-\.Zelazko theorem, see \cite{KowSlo}. Theorem 1 (also see Remark) in \cite{Sem} tells us the quite surprising observation that if $H$ is a separable Hilbert space, then every 2-local automorphism of algebra $\B$ is in fact an algebra automorphism. Here and in what follows, $\B$ denotes the $C^*$-algebra of all bounded linear operators on $H$. This remarkable result attracted serious attention and motivated a number of further investigations. We refer only to some of the related papers \cite{AlHFle, AyuKud1, AyuKud2, Bur, CW, Gyor,  HatMiuetal, Jim, VarVal, Kim1, Kim2, LiuWon, M3, M1, M2, XL} and Chapter 3 in the book \cite{M} which treats these kinds of problems.

The aim of the present paper is to show that, in a certain context, even more than what was obtained in \cite{Sem} can be proven. Namely, for algebra *-automorphisms of $\B$, the two equations appearing in \eqref{E:main} can be compressed into one single equation and we still obtain essentially the same conclusion as in \cite{Sem}.
More precisely, we prove the next theorem. 

Throughout this paper, $H$ stands for a separable complex Hilbert space.

\begin{theorem}\label{T:add}
Suppose that $\phi:\mathcal{B}(H)\to\mathcal{B}(H)$ is a map (linearity is not assumed) with the following property: for any $A,B \in \mathcal{B}(H)$ there exists a *-automorphism $\phi_{A,B}$ of the $C^*$-algebra $\mathcal{B}(H)$ such that
$$
\phi(A)+\phi(B)=\phi_{A,B}(A+B).
$$
If $\dim H\geq 3$, then $\phi$ is necessarily a *-automorphism of $\mathcal{B}(H)$. If $\dim H=2$, then $\phi$ is either a *-automorphism or a *-antiautomorphism of $\B$.
\end{theorem}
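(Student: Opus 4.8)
The plan is to exploit the single defining equation by specialising it and by reading off unitary invariants. First I would set $B=0$: since every *-automorphism of $\B$ is inner, the hypothesis gives $\phi(A)=\phi_{A,0}(A)=U_A A U_A^*$ for some unitary $U_A$, so $\phi(A)$ is unitarily equivalent to $A$ (write $\phi(A)\sim A$) for every $A$. In particular $\phi$ preserves the spectrum, self-adjointness and positivity, maps projections to projections of the same rank, and satisfies $\phi(0)=0$, $\phi(I)=I$. Feeding this back into the general equation, $\phi(A)+\phi(B)=\phi_{A,B}(A+B)\sim A+B$ for all $A,B$, so the full spectrum of $\phi(A)+\phi(B)$ always coincides with that of $A+B$.

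Second I would analyse rank-one projections. If $P\perp Q$ then $P+Q$ is a projection, hence so is $\phi(P)+\phi(Q)\sim P+Q$, and for a sum of two projections this forces $\phi(P)\perp\phi(Q)$. Moreover, for unit vectors $x,y$ the nonzero eigenvalues of $xx^*+yy^*$ are $1\pm|\langle x,y\rangle|$, so the spectrum of $\phi(P)+\phi(Q)\sim P+Q$ determines $|\langle x,y\rangle|$; thus $\phi$ preserves transition probabilities between rank-one projections. The transition-probability form of Wigner's theorem then yields a unitary or antiunitary $W$ with $\phi(P)=WPW^*$ for every rank-one projection $P$.

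The extension from rank-one projections to all of $\B$ is the step I expect to be the main obstacle. Composing $\phi$ with $\mathrm{Ad}_{W^*}$ (which, in the unitary case, again satisfies the hypothesis, since $\mathrm{Ad}_{W^*}\circ\phi_{A,B}$ is still a *-automorphism), I may assume $\phi$ fixes every rank-one projection, and then climb up. For a projection $Q$ and rank-one $R=zz^*$ the top eigenvalue $1+\sqrt{\langle z,Qz\rangle}$ of $Q+R$ is a unitary invariant, so $\phi(Q)+R\sim Q+R$ gives $\langle z,\phi(Q)z\rangle=\langle z,Qz\rangle$ for all $z$, whence $\phi(Q)=Q$ for all projections. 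For self-adjoint $A$ I would use the asymptotics $\mu_{\max}(A+t\,zz^*)=t+\langle z,Az\rangle+o(1)$ as $t\to\infty$: since $\phi(t\,zz^*)=t\,zz^*$ (again by a transition-probability comparison) and $\phi(A)+t\,zz^*\sim A+t\,zz^*$, the limit forces $\langle z,\phi(A)z\rangle=\langle z,Az\rangle$, so $\phi(A)=A$. For arbitrary $X$ one runs the same asymptotic with the support function of the numerical range, $\mu_{\max}\bigl(\mathrm{Re}(e^{-i\theta}(X+t\,zz^*))\bigr)$, recovering $\langle z,\phi(X)z\rangle=\langle z,Xz\rangle$ for all $z$ and hence $\phi(X)=X$. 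Undoing the reduction gives $\phi=\mathrm{Ad}_W$, a *-automorphism.

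Finally I would settle the dichotomy. If $W$ is antiunitary, the identical chain of arguments, comparing now with the *-antiautomorphism $\beta=\mathrm{Ad}_W$ (which still preserves spectra and numerical ranges, as $\beta(M)$ is unitarily equivalent to $M^{t}$), shows $\phi=\beta$. But $\phi(A)\sim A$ from the first step forces $A^{t}\sim A$ for every $A$; since $\tr M$, $\tr(M^{2})$, $\tr(M^{*}M)$ form a complete set of unitary invariants in dimension two, this holds for all $A$ when $\dim H=2$, whereas in dimension $\geq 3$ there are operators not unitarily equivalent to their transpose. Hence for $\dim H\geq 3$ the antiunitary case cannot occur and $\phi$ is a *-automorphism, while for $\dim H=2$ a *-antiautomorphism (like $A\mapsto A^{t}$) genuinely satisfies the hypothesis as well, giving exactly the stated two possibilities.
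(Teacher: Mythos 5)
Your overall route is genuinely different from the paper's: the paper first notes $\phi(-A)=-\phi(A)$, so that the hypothesis becomes a 2-local \emph{isometry} condition, and then quotes Theorem \ref{T:izom} (whose proof goes through Kuzma's theorem \cite{K} and Lemma \ref{L:id}); you instead work directly with unitary invariants. Most of your steps check out: $\phi(0)=0$ (take $A=B=0$ first --- as written, your specialization $B=0$ quietly uses this), hence $\phi(A)=U_AAU_A^*$; preservation of transition probabilities between rank-one projections; the formula $\mu_{\max}(Q+z\otimes z)=1+\sqrt{\langle Qz,z\rangle}$; the asymptotics $\mu_{\max}(A+t\,z\otimes z)=t+\langle Az,z\rangle+o(1)$ and its numerical-range version; and the transpose dichotomy at the end, which is also how the paper itself separates $\dim H\geq 3$ from $\dim H=2$.

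The genuine gap is your appeal to Wigner's theorem. Since $\phi$ restricted to rank-one projections is \emph{not known to be surjective} --- and surjectivity is exactly what the paper keeps emphasizing as the hard part --- the applicable (non-surjective) form of Wigner's theorem, e.g.\ Theorem 2.1.4 in \cite{M}, yields only a linear or conjugate-linear isometry $J:H\to H$ with $\phi(P)=JPJ^*$, not a unitary or antiunitary $W$; indeed $P\mapsto SPS^*$, with $S$ the unilateral shift, preserves all transition probabilities, so no version of the theorem can give surjectivity for free. Your next move collapses without it: if $J$ is a proper isometry, $\mathrm{Ad}_{J^*}\circ\phi_{A,B}$ is not a *-automorphism, so the reduction ``I may assume $\phi$ fixes every rank-one projection'' is unavailable, and the whole climb after it inherits the gap. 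The gap is repairable inside your framework: run your rank-one comparisons without the reduction to get $\phi(t\,z\otimes z)=t\,(Jz)\otimes(Jz)$ and then $\langle\phi(A)Jz,Jz\rangle=\langle Az,z\rangle$ for self-adjoint $A$; apply this with $A$ positive, trace-class, of dense range and $\tr A=1$; since $\tr\phi(A)=\tr A$ and $\sum_n\langle\phi(A)Je_n,Je_n\rangle=\tr A$ for an orthonormal basis $(e_n)$, positivity forces $\phi(A)$ to vanish on $(\rng J)^{\perp}$, and the density of $\rng\phi(A)$ then gives $\rng J=H$. This is precisely the dense-range repair the paper performs for its own Wigner isometry in the proof of Theorem \ref{T:szor}. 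With that supplied (and with the conjugate-linear branch written out carefully: for antiunitary $W$ the map $X\mapsto WXW^*$ is conjugate-linear, and $\phi(\lambda I)=\lambda I$ forces $\phi(X)=WX^*W^*$, whose values are unitarily similar to transposes), your argument becomes a correct, self-contained proof of Theorem \ref{T:add} that bypasses Theorem \ref{T:izom} entirely --- at the price of not obtaining that stronger statement as a by-product.
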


The previous result shows that, assuming $\dim H \geq 3$, we can sum up the equalities in \eqref{E:main} and still have the conclusion that such a map is necessarily a *-automorphism. As for  the operation of multiplication, we have a similar statement which reads as follows.

\begin{theorem}\label{T:szor}
Suppose that $\phi:\mathcal{B}(H)\to\mathcal{B}(H)$ is a map (linearity is not assumed) with the following property: for any $A,B \in \mathcal{B}(H)$ there exists a *-automorphism $\phi_{A,B}$ of the $C^*$-algebra $\mathcal{B}(H)$ such that
$$
\phi(A)\phi(B)=\phi_{A,B}(AB).
$$
Then $\phi$ is either a *-automorphism or the negative of a *-automorphism of $\mathcal{B}(H)$.
\end{theorem}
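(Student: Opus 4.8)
The plan is to determine $\phi$ through its action on the rank-one projections and then to propagate that information to all of $\mathcal{B}(H)$ by trace comparisons. Write $P_\xi$ for the rank-one projection onto the line $\C\xi$, with $\xi$ a unit vector. First I would record the consequences of feeding special elements into the hypothesis. Putting $A=B=I$ gives $\phi(I)^2=\phi_{I,I}(I)=I$, so $\phi(I)$ is invertible; putting $A=P_\xi$, $B=I$ gives $\phi(P_\xi)\phi(I)=\phi_{P_\xi,I}(P_\xi)$, a rank-one projection, whence $\phi(P_\xi)=\phi_{P_\xi,I}(P_\xi)\phi(I)^{-1}$ has rank at most one. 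On the other hand $A=B=P_\xi$ yields $\phi(P_\xi)^2=\phi_{P_\xi,P_\xi}(P_\xi)$, again a rank-one projection. Since an elementary computation shows that a rank-one operator whose square is a rank-one projection $F$ must equal $\pm F$, for each $\xi$ there are a sign $\epsilon_\xi\in\{-1,1\}$ and a rank-one projection $Q_\xi$ with $\phi(P_\xi)=\epsilon_\xi Q_\xi$.

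Next I would pin down the signs and the geometry of $\xi\mapsto Q_\xi$. Every *-automorphism of $\mathcal{B}(H)$ is unitarily implemented, hence preserves the trace of trace-class operators; comparing traces in $\phi(P_\xi)\phi(P_\zeta)=\phi_{P_\xi,P_\zeta}(P_\xi P_\zeta)$ gives $\epsilon_\xi\epsilon_\zeta|\langle\xi,\zeta\rangle|^2=|\langle\xi,\zeta\rangle|^2$, so $\epsilon_\xi=\epsilon_\zeta$ whenever $\langle\xi,\zeta\rangle\neq0$; as any two rays share a common non-orthogonal ray, the sign is a global constant $\epsilon$. Comparing norms in the same identity gives $|\langle q_\xi,q_\zeta\rangle|=|\langle\xi,\zeta\rangle|$, where $Q_\xi$ projects onto $\C q_\xi$, so $\xi\mapsto q_\xi$ preserves all transition probabilities and, by Wigner's theorem, is induced by a unitary or an antiunitary $V$; thus $\phi(P_\xi)=\epsilon\,VP_\xi V^{*}$ for every $\xi$.

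To extend this to arbitrary $A$ I would again exploit traces. From $\phi(A)\phi(P_\xi)=\phi_{A,P_\xi}(AP_\xi)$ and trace-preservation one gets $\tr(\phi(A)Q_\xi)=\epsilon\,\tr(AP_\xi)$, i.e. $\langle\phi(A)q_\xi,q_\xi\rangle=\epsilon\langle A\xi,\xi\rangle$ for every unit vector $\xi$. If $V$ is unitary, then $q_\xi$ (up to a phase, which is harmless here) runs over all unit vectors, and a change of variable turns this into $\langle\phi(A)\eta,\eta\rangle=\epsilon\langle VAV^{*}\eta,\eta\rangle$ for all $\eta$; polarization yields $\phi(A)=\epsilon\,VAV^{*}$, which is an inner *-automorphism if $\epsilon=1$ and the negative of one if $\epsilon=-1$ — exactly the desired conclusion. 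If instead $V$ were antiunitary, the very same computation would force $\phi(A)=\epsilon\,VA^{*}V^{-1}$, that is $\phi=\epsilon J$ for a *-antiautomorphism $J$.

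This is where the real work lies, and I expect the passage from the rank-one projections to all of $\mathcal{B}(H)$ to be the main obstacle. In the antiunitary alternative the hypothesis with $B=I$ gives $J(A)$ unitarily equivalent to $A$ for every $A$ (note $\phi(I)=\epsilon I$), while for general $A,B$ it gives $\phi(A)\phi(B)=J(BA)$ unitarily equivalent to $AB$; since $J(BA)$ is unitarily equivalent to $BA$, these combine to force $AB$ and $BA$ to be unitarily equivalent for all $A,B$. That already fails for a rank-one nilpotent $A$ and a suitable rank-one projection $B$, one of $AB,BA$ being zero and the other not, so the antiunitary case is impossible and only $\phi(A)=\epsilon VAV^{*}$ survives. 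It is worth emphasizing that this exclusion of the antiautomorphic possibility is uniform in the dimension and must be carried out here, at the level of all of $\mathcal{B}(H)$, rather than on the projections where the antiunitary behaviour stays consistent; this is precisely the feature that distinguishes the multiplicative statement from the additive one, where a *-antiautomorphism genuinely occurs when $\dim H=2$.
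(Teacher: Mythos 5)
Your outline follows the same skeleton as the paper's proof (signs of the images of rank-one projections controlled by traces, Wigner's theorem, and a final trace comparison $\tr(\phi(A)Q_\xi)=\epsilon\,\tr(AP_\xi)$ identifying $\phi(A)$), and several of your steps are correct: the observation that a rank-one operator whose square is a rank-one projection is $\pm$ that projection, the constancy of the sign via non-orthogonal chains, and the final polarization step. However, there is a genuine gap at the pivotal point: you invoke Wigner's theorem to conclude that $\xi\mapsto q_\xi$ is induced by a \emph{unitary or antiunitary} $V$. That form of Wigner's theorem requires the ray transformation to be surjective, which you do not know at this stage; your map on projections is only known to be injective into the set of rank-one projections and to preserve transition probabilities. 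For such (possibly non-surjective) maps, Wigner's theorem -- e.g.\ Theorem 2.1.4 in the book \cite{M} that the paper cites -- yields only a linear or conjugate-linear \emph{isometry} $V$, and this cannot be improved for free: the map $P\mapsto WPW^*$ with $W$ a unilateral shift preserves all transition probabilities and is induced by no unitary or antiunitary operator. The gap is not cosmetic, because your subsequent argument uses surjectivity essentially: if $V$ is a non-surjective isometry, then ``$q_\xi$ runs over all unit vectors'' fails, the change of variables plus polarization only gives $V^*\phi(A)V=\epsilon A$, and this leaves $\phi(A)$ completely undetermined on the orthogonal complement of the range of $V$, so you cannot conclude $\phi(A)=\epsilon VAV^*$ (nor run your antiunitary exclusion, which presupposes the global form $\phi(A)=\epsilon VA^*V^{-1}$).

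Supplying the missing surjectivity is precisely where the paper does its hard work: it proves that $S=\phi(I)$ is a self-adjoint involution, that images of positive compact operators are self-adjoint, that $\phi$ is additive and positively homogeneous on positive Hilbert--Schmidt operators, and only then deduces that the image of an injective positive compact operator with dense range again has dense range, which forces the Wigner isometry to be unitary or antiunitary. Your proof needs an argument of comparable substance at this point; for instance, one could try to show from the norm identities $\Vert\phi(A)\phi(P_\xi)\Vert=\Vert AP_\xi\Vert$ that the range of $V$ reduces every $\phi(A)$, and then use the unitary equivalence of $\phi(A)\phi(I)$ with a diagonal injective positive compact $A$ to force the complementary summand to vanish -- but some such argument must actually be carried out. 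Once surjectivity is in hand, the remainder of your proposal is correct, and your exclusion of the antiautomorphic alternative (via the failure of unitary equivalence of $AB$ and $BA$ for a rank-one nilpotent $A$) is a nice, more elementary variant of the paper's exclusion, which instead uses $\phi(iI)=\phi(iI)\phi(I)=iI$ to rule out an antiunitary implementing operator.
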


\section{Proofs}
In this section we present the proofs of our statements. We begin with Theorem \ref{T:add}. In fact, that statement will be deduced from the following somewhat stronger result concerning 2-local maps of $\B$ corresponding to its full group of all (not necessarily linear) isometries. (The fact that the next result is formally really stronger will be discussed below.)
The result concerns the isometries of $\B$ which correspond to the metric induced by the operator norm $\Vert.\Vert$. 

\begin{theorem}\label{T:izom}
Let $\phi:\B\to \B$ be a map (no linearity is assumed) with the property that for any $A,B\in \B$ there exists a surjective isometry (surjective distance preserving map) $\phi_{A,B}$ of $\B$ such that $\phi(A)=\phi_{A,B}(A),\phi(B)=\phi_{A,B}(B)$. Then $\phi$ is necessarily a surjective isometry of $\B$.
\end{theorem}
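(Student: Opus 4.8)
The plan is to observe first that distance preservation comes for free and then to concentrate all the effort on surjectivity. Indeed, for any $A,B\in\B$ the defining property gives a surjective isometry $\phi_{A,B}$ with $\phi(A)=\phi_{A,B}(A)$ and $\phi(B)=\phi_{A,B}(B)$, whence $\Vert\phi(A)-\phi(B)\Vert=\Vert\phi_{A,B}(A)-\phi_{A,B}(B)\Vert=\Vert A-B\Vert$. Thus $\phi$ is automatically an isometric (in particular injective) self-embedding of $\B$, and since $\B$ is complete its image $\rng\phi$ is a closed subset of $\B$. The entire content of the theorem is therefore the assertion that $\rng\phi=\B$, i.e.\ that this isometric embedding is onto.

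To get a grip on $\phi$, I would first normalize. Composing $\phi$ with the translation $X\mapsto X-\phi(0)$ (a surjective isometry, and one that preserves the $2$-local hypothesis, replacing each $\phi_{A,B}$ by $\phi_{A,B}-\phi(0)$) we may assume $\phi(0)=0$. Then for every $A$ the map $\phi_{0,A}$ is a surjective isometry fixing $0$, so by the Mazur--Ulam theorem it is real-linear, and $\phi(A)=\phi_{0,A}(A)$. Now I would invoke the classification of surjective real-linear isometries of $\B$ (Kadison's theorem together with its conjugate-linear counterpart): each $\phi_{0,A}$ is of one of the four forms $X\mapsto U_AXV_A$, $X\mapsto U_AX^{t}V_A$, $X\mapsto U_AX^{*}V_A$, $X\mapsto U_A\overline{X}V_A$ with $U_A,V_A$ unitaries. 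A further normalization is available: since images of unitaries under these forms are unitary, $W:=\phi(I)=\phi_{0,I}(I)$ is a unitary, and replacing $\phi$ by $X\mapsto W^{*}\phi(X)$ we may also assume $\phi(I)=I$, which forces $\phi_{0,I}$ to be a Jordan-type map (a $*$-automorphism, a $*$-antiautomorphism, or one of their conjugate-linear analogues).

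With this local picture in hand, the remaining --- and decisive --- task is to glue the $A$-dependent data (type, $U_A$, $V_A$) into a single global surjective isometry that coincides with $\phi$; surjectivity of $\phi$ then follows because that global map is onto. Here I would feed pairs into the $2$-local hypothesis in a targeted way: scalar pairs $(A,\lambda A)$ to extract homogeneity information, pairs built from rank-one operators, projections and unitaries (on which the four forms act in structurally distinguishable ways) to force the ``type'' to be constant and to pin down the unitaries up to the ambiguities inherent in the singular value data, and finally a \v Semrl-type sequential argument exploiting the separability of $H$ to promote consistency on a countable dense determining set to consistency everywhere.

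The main obstacle is exactly this globalization, and it is genuinely the hard part rather than a formality. The reason is that surjectivity cannot be obtained cheaply: an infinite-dimensional space such as $\B$ admits proper isometric (indeed linear) self-embeddings, so neither the isometry property nor linearity alone yields ontoness, and the closedness of $\rng\phi$ must be combined with an openness (or connectedness) argument that the naive approach fails to supply. The source of the difficulty is that the group of surjective linear isometries of $\B$ is far from transitive on spheres --- the orbit of an operator consists precisely of the operators with the same singular values --- so one cannot transport a neighbourhood of a point of $\rng\phi$ into $\rng\phi$ by simply applying an ambient isometry. Overcoming this is where the separability of $H$ and a careful case analysis ruling out ``type mixing'' across overlapping pairs must do the real work.
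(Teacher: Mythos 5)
Your setup is fine and matches the paper's: the isometry property of $\phi$ is indeed automatic, the normalization $\phi(0)=0$ is legitimate, and Mazur--Ulam plus the classification of surjective real-linear isometries of $\B$ gives the local picture you describe. But the proof stops exactly where the theorem begins. Everything after ``With this local picture in hand'' is a plan, not an argument: you never actually produce the single global isometry, and you yourself concede that this globalization ``is genuinely the hard part.'' A proof attempt that reduces the statement to its hard part and then gestures at it has a gap coextensive with the theorem.

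Worse, the one concrete device you propose for closing the gap cannot work. You suggest ``a \v Semrl-type sequential argument exploiting the separability of $H$ to promote consistency on a countable dense determining set to consistency everywhere.'' But $\B$ with the operator norm is \emph{not} separable when $H$ is infinite dimensional: there is no countable dense subset, and neither the finite-rank operators nor the compacts are norm-dense in $\B$, so no density argument can carry information from a tractable subset to all of $\B$. This is precisely the obstruction the paper has to engineer around, and it does so in two stages that your outline has no counterpart for. First, on the finite-rank operators $\F$ the map $\phi$ preserves the Hilbert--Schmidt distance, so by Baker's theorem (isometries into a strictly convex space are affine \emph{without} assuming surjectivity) $\phi$ is real-linear there; Kuzma's theorem on additive rank-one-decreasing maps then forces $\phi(x\otimes y)=(Ax)\otimes(By)$, and a dense-range compact operator argument upgrades $A,B$ to unitaries or antiunitaries, so after composing with a surjective isometry one may assume $\phi=\mathrm{id}$ on $\K$. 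Second --- and this is the step your non-transitivity discussion correctly flags as the crux but does not resolve --- the paper extends from $\K$ to $\B$ not by density but by an identification lemma: if $\Vert T+K\Vert=\Vert I+K\Vert$ for all $K\in\K$ then $T=I$; combining this with the fact that every element of $\B$ is a linear combination of four unitaries, one shows by a finite bootstrap ($\lambda W+K$, then $\lambda W+\lambda'W'$, then adding a compact, then three unitaries, then four) that $\phi$ fixes all of $\B$. Without some replacement for these two mechanisms --- real-linearity on $\F$ obtained for free, and distance-to-compacts identification in place of density --- your outline cannot be completed along the lines you indicate.
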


We remark that similar results concerning the group of all linear surjective isometries of operator algebras and function algebras were presented, among others, in the papers \cite{AlHFle, Gyor, HatMiuetal, Jim, VarVal, M1}. 

Turning to the statement in Theorem \ref{T:izom},
one can trivially see that, by the assumption above, the map $\phi$ in Theorem \ref{T:izom} is necessarily an isometry (distance preserving map) and what we need to prove is 'only' its surjectivity. One may think that this is not a big deal but, as we will see, it is highly nontrivial, we have to work quite hard to verify it.

Before presenting the proof of Theorem \ref{T:izom}, let us make its content more transparent by determining the structure of  the surjective isometries of $\B$. Let $\psi:\B \to \B$ be a surjective distance preserving map.
By the celebrated Mazur-Ulam theorem which tells that the surjective isometries between normed real-linear spaces are automatically affine, we have that the map $\omega(.)=\psi(.)-\psi(0)$ is a real-linear surjective isometry of $\B$. We claim that it is in fact either linear or conjugate-linear. To see this,
we make use of the result \cite[Corollary 3.3]{D} of Dang asserting that every surjective real-linear isometry of a $C^*$-algebra induces a decomposition of the algebra into the direct sum of two subalgebras such that the isometry in question is linear on the first subalgebra and conjugate-linear on the second. Clearly, $\B$ is not decomposable into the direct sum of two nontrivial subalgebras, hence we obtain that $\omega$ is either linear or conjugate-linear.
The structure of linear isometries of $\B$ is well-known. Namely, if $\omega$ is a linear surjective isometry of $\B$, then there are unitaries $U,V\in \B$ such that $\omega(A)=UAV$, $A\in \B$ or there are antiunitaries $U,V\in \B$ such that $\omega(A)=UA^*V$, $A\in \B$ (see, e.g., Theorem A.9 on page 208 in \cite{M}). If $\psi$ is a conjugate-linear surjective isometry of $\B$, then the map $A\mapsto \omega(A)^*$ is clearly a linear surjective isometry of $\B$ the structure of which is known.

Putting all these information together, we easily obtain that a map $\psi:\B\to \B$ is a surjective isometry if and only if there exist operators $U$ and $V$ on $H$ either both unitary or both antiunitary and an element $X\in \B$ such that $\psi$ is of one of the following two forms:
	\begin{equation*}
		\psi(A)=UAV+X, \quad A\in\B,
		\end{equation*}
		or
		\begin{equation*}
		\psi(A)=UA^*V+X,\quad A\in\B.
	\end{equation*}
	
One can now see that the content of Theorem \ref{T:izom} is exactly the following statement: If $\phi:\B\to \B$ is a map with the property that for any $A,B\in \B$ we have a pair $U_{A,B},V_{A,B}$ of either both unitary or both antiunitary operators on $H$ such that
\begin{equation}\label{E:ref1}
\phi(A)-\phi(B)=U_{A,B}(A-B)V_{A,B}
\end{equation}
or
\begin{equation}\label{E:ref2}
\phi(A)-\phi(B)=U_{A,B}(A-B)^*V_{A,B},
\end{equation}
then we necessarily have one single pair $U,V$ of either both unitary or both antiunitary operators on $H$ and an element $X\in \B$ such that
\[
\phi(A)=UAV +X, \quad A\in \B
\]
or
\[
\phi(A)=UA^*V +X, \quad A\in \B.
\]

After this discussion, we begin the proof of Theorem \ref{T:izom} with first presenting two auxiliary statements on which our proof rests. The first one is a result of Kuzma on the structure of additive maps decreasing rank one. Let $\F$ denote the algebra of all finite rank operators in $\B$. We say that an additive transformation $\psi:\F\to \F$ is decreasing rank one if $\psi$ maps rank-one operators to operators of rank at most one. We will also need the concept of quasilinearity of operators. Let $A:H\to H$ be an additive map and $h:\C\to\C$ be a nonzero ring homomorphism. We say that $A$ is $h$-quasilinear if $A(\lambda x)=h(\lambda) Ax$ holds for all $x\in H$ and $\lambda\in\C$. Let us introduce the following notation. For any $x,y\in H$, set
$$
L_x=\lbrace x\otimes y \vert y\in H \rbrace,\quad R_y=\lbrace x\otimes y \vert x\in H \rbrace.
$$
Here, for any $x,y\in H$, the symbol $x\otimes y$ stands for the rank at most one operator defined by $(x\otimes y)(z)=\langle z,y\rangle x$, $z\in H$.
Now, the theorem of Kuzma, namely \cite[Theorem 2.1]{K}, reads as follows.

\begin{theorem*}[Kuzma]
If $\psi:\F\to\F$ is an additive map which is decreasing rank one and its range is neither contained in any $L_x$ nor contained in any $R_y$, then $\psi$ is of one of the following two forms:
			\begin{equation*}\label{E:4}
				\psi(x\otimes y)=(Ax)\otimes(By), \quad x,y\in H,
			\end{equation*}
			or
			\begin{equation*}\label{E:5}
				\psi(x\otimes y)=(By)\otimes(Ax), \quad x,y\in H,
			\end{equation*}
			where $A,B:H\to H$ are $h$-quasilinear operators with some ring homomorphism $h:\C\to\C$.
	\end{theorem*}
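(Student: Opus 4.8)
The plan is to reduce everything to the action of $\psi$ on rank-one operators, since by additivity $\psi$ is determined on all of $\F$ by its values on rank-ones (every finite rank operator being a finite sum of such). The combinatorial heart of the matter is the elementary fact that for two nonzero rank-one operators $x_1\otimes y_1$ and $x_2\otimes y_2$ the sum has rank at most one if and only if $x_1,x_2$ are linearly dependent or $y_1,y_2$ are linearly dependent; call two rank-ones \emph{compatible} when this holds. Because $\psi$ is additive and decreases rank one, whenever $P,Q$ are rank one with $P+Q$ again rank one we get that $\psi(P)+\psi(Q)=\psi(P+Q)$ has rank at most one, so $\psi$ sends compatible pairs to compatible pairs (with the proviso that some images may be $0$). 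I would first record the incidence geometry this induces: the maximal families of pairwise compatible rank-one operators are exactly the pencils $L_x$ and $R_y$; moreover $L_x\cap R_y=\{x\otimes y\}$, while two distinct pencils of the same type meet only in $0$. This incidence skeleton is what $\psi$ must respect.

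Next I would analyze the image of a single pencil. Fixing $x$, the operators $\{\psi(x\otimes y):y\in H\}$ are pairwise compatible, so after discarding those that vanish they must all share a common left factor or all share a common right factor; that is, $\psi(L_x)$ lies in a single $L_{x'}$ or a single $R_{y'}$, and symmetrically for each $R_y$. The non-degeneracy hypothesis---that the range of $\psi$ lies in no single $L_x$ and in no single $R_y$---excludes the totally collapsed situations, and together with the relation $L_x\cap R_y=\{x\otimes y\}$ it forces a global dichotomy: either every $L_x$ is carried into a left-pencil and every $R_y$ into a right-pencil, or the two types are systematically interchanged. Proving that this choice is uniform across all pencils, and that the range condition genuinely rules out the mixed and degenerate possibilities, is the step I expect to be the main obstacle; everything before and after it is comparatively mechanical.

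In the first (non-interchanging) case one then obtains a separation of variables $\psi(x\otimes y)=f(x)\otimes g(y)$ for suitable maps $f,g:H\to H$, and additivity of $\psi$ immediately forces $f$ and $g$ to be additive. To pin down the scalar behaviour I would use that $\lambda(x\otimes y)=(\lambda x)\otimes y$ and $\bar\lambda(x\otimes y)=x\otimes(\lambda y)$, which makes the action $\psi(\lambda P)=h(\lambda)\psi(P)$ well defined on rank ones; additivity of $\psi$ gives $h(\lambda+\mu)=h(\lambda)+h(\mu)$, while composing the scalar action for $\lambda$ and $\mu$ yields $h(\lambda\mu)=h(\lambda)h(\mu)$, so $h$ is a nonzero ring homomorphism of $\C$. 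Comparing $\psi((\lambda x)\otimes y)$ computed two ways then gives
\[
f(\lambda x)\otimes g(y)=f(x)\otimes g(\bar\lambda y),
\]
from which $f(\lambda x)=h(\lambda)f(x)$; writing $A=f$, $B=g$ this is exactly the asserted $h$-quasilinearity, the quasilinearity of the second factor following after the conjugation bookkeeping forced by the conjugate-linearity of the second slot. This delivers the form $\psi(x\otimes y)=(Ax)\otimes(By)$. The interchanging case is handled identically after composing $\psi$ with the transposition $T\mapsto T^{*}$ on rank ones, producing the form $\psi(x\otimes y)=(By)\otimes(Ax)$. The only remaining work is to check that a single $h$ serves both factors and that the degenerate values, where $Ax=0$ or $By=0$ and hence $\psi$ vanishes, are consistent with quasilinearity.
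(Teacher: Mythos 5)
A point of order first: the paper contains no proof of this statement at all --- it is Kuzma's theorem, imported verbatim from \cite[Theorem 2.1]{K} as an external ingredient in the proof of Theorem \ref{T:izom} --- so your proposal has to stand on its own as a proof of a nontrivial published result. Judged that way, it has a genuine gap, one you flag yourself and never close. The preparatory material is correct: the compatibility criterion (a sum of two nonzero rank-one operators has rank at most one iff the left factors or the right factors are linearly dependent), the fact that a pairwise compatible family of nonzero rank-ones lies in a single $L_x$ or a single $R_y$, and hence that $\psi$ carries each pencil into a pencil. But the crux of the theorem is exactly the step you defer with ``the step I expect to be the main obstacle'': that the left/right type of the image pencil is assigned uniformly, and that the range hypothesis excludes the mixed and degenerate configurations. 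This is where essentially all of the difficulty of Kuzma's result lives, and it is genuinely delicate because $\psi$ is only additive: it may annihilate rank-one operators, so the image of a pencil can degenerate to a single ray or to $\{0\}$, in which case it lies in pencils of \emph{both} types and ``the type of $\psi(L_x)$'' is not even well defined. One must eliminate, using additivity by hand, configurations where some pencils go to left pencils, others to right pencils, and others collapse. (A sample of what is required: if $\psi(L_{x_1})$ lies in a left pencil $L_u$ but in no right pencil, while $\psi(L_{x_2})$ lies in a right pencil $R_v$ but in no left pencil, then compatibility of the images of $x_1\otimes y$ and $x_2\otimes y$ for every $y$, applied at $y_a$, $y_b$ and $y_a+y_b$ for suitably chosen $y_a,y_b$, yields a contradiction --- but this is one case among many, and the degenerate ones are the painful ones.) Without this step there is no proof.

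The endgame is also glossed at points where real content hides. You declare the scalar action $\psi(\lambda P)=h(\lambda)\psi(P)$ ``well defined on rank ones,'' but a priori the scalar depends on $P$; your own derivation of multiplicativity, $h(\lambda\mu)=h(\lambda)h(\mu)$, silently uses independence of $h$ from the base operator, which itself requires an additivity argument across compatible pairs. Likewise, ``a single $h$ serves both factors'' is not mere conjugation bookkeeping under the paper's conventions: since $x\otimes y$ is conjugate-linear in $y$, one has $x\otimes y=(cx)\otimes(\bar c^{-1}y)$ for every $c\neq 0$, and well-definedness of the expression $(Ax)\otimes(By)$ with both $A$ and $B$ $h$-quasilinear forces the identity $h(\bar c)=\overline{h(c)}$, which is a genuine constraint (it fails for wild ring homomorphisms of $\C$); what your computation naturally produces is that $B$ is $h^*$-quasilinear with $h^*(\lambda)=\overline{h(\bar\lambda)}$. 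None of this says your strategy is wrong --- it is the standard incidence-geometry strategy, and it is presumably close to what Kuzma does --- but as written the proposal is an outline whose central step and whose final normalizations are missing.
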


The other ingredient of the proof of Theorem \ref{T:izom} is the following identification lemma.
In what follows let $\K$ denote the ideal of all compact linear operators on $H$.

\begin{lemma}\label{L:id}
If $T\in \B$ is an operator such that for every $K\in\K$ we have $\Vert I+K\Vert=\Vert T+K\Vert$, then $T=I$ necessarily holds.
\end{lemma}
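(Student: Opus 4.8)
The plan is to test the hypothesis only against a carefully chosen, very small family of finite rank (hence compact) perturbations, and to squeeze out the conclusion vector by vector. Fix an arbitrary unit vector $x\in H$, and for a unimodular scalar $\mu$ consider the rank-one operator $K=(\mu-1)(x\otimes x)\in\K$. Since $x\otimes x$ is the orthogonal projection onto $\C x$, the operator $I+K=(I-x\otimes x)+\mu(x\otimes x)$ is normal with spectrum contained in $\{1,\mu\}$, so $\Vert I+K\Vert=\max(1,|\mu|)=1$. By the hypothesis of the lemma we therefore get $\Vert T+K\Vert=1$ as well, and in particular $\Vert (T+K)x\Vert\le 1$, that is,
\[
\Vert Tx+(\mu-1)x\Vert\le 1 \qquad\text{for every unimodular }\mu.
\]

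Next I would turn this one-parameter family of inequalities into a rigidity statement. Writing $c=\langle Tx,x\rangle$ and expanding the square, the displayed inequality becomes
\[
\Vert Tx\Vert^2+1-2\Re c+2\Re\bigl[\mu(\bar c-1)\bigr]\le 0 \qquad\text{for all }|\mu|=1.
\]
Maximizing the $\mu$-dependent term over the unit circle (its maximum value is $2|1-c|$) yields $\Vert Tx\Vert^2+1-2\Re c+2|1-c|\le 0$. Using the identity $1-2\Re c=|1-c|^2-|c|^2$ together with the Cauchy--Schwarz bound $|c|\le\Vert Tx\Vert$, this rearranges into
\[
|1-c|^2+2|1-c|+\bigl(\Vert Tx\Vert^2-|c|^2\bigr)\le 0,
\]
a sum of three nonnegative quantities. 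Hence each of them vanishes: $c=1$ and $\Vert Tx\Vert=|c|=1$. Consequently $\Vert Tx-x\Vert^2=\Vert Tx\Vert^2-2\Re c+1=0$, so $Tx=x$. As $x$ was an arbitrary unit vector, this forces $T=I$, which is the assertion.

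I expect the only genuine difficulty to be guessing the right perturbations rather than carrying out the estimate. The point is that perturbing a rank-one projection by an arbitrary \emph{phase} $\mu$ keeps $\Vert I+K\Vert$ pinned at $1$, so the full strength of the hypothesis survives while the algebra collapses to a single one-variable optimization. A more naive choice, such as real scalings $K=\lambda(x\otimes x)$, leaks too much information: it only produces $\Re\langle Tx,x\rangle\le 1$ and is far too weak to recover $Tx=x$. Once the unimodular family is in hand, everything else is routine, and notably the argument needs no separate estimate on $\Vert T\Vert$ and makes no restriction on $\dim H$.
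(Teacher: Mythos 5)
Your proof is correct, and it takes a genuinely different route from the paper's. The paper perturbs by a single rank-one projection $P=x\otimes x$, for which $\Vert I+P\Vert=2$, and then has to extract the consequences of $\Vert T+P\Vert=2$ by soft analysis: a maximizing sequence of unit vectors, the parallelogram identity to force $\Vert Ty_n-Py_n\Vert\to 0$, weak compactness of the unit ball to produce a weak limit $z$ with $|\langle z,x\rangle|=1$, and weak continuity of $T$ to identify $Tz$ and conclude $Tx=x$. You instead perturb by the one-parameter family $K=(\mu-1)(x\otimes x)$ with $|\mu|=1$, for which $I+K$ is unitary and hence $\Vert I+K\Vert=1$; the hypothesis then pins $\Vert(T+K)x\Vert\le 1$ for every phase $\mu$, and an elementary optimization over the circle, combined with the identity $1-2\Re c=|1-c|^2-|c|^2$ and Cauchy--Schwarz, collapses everything into a sum of three nonnegative terms that must all vanish, giving $Tx=x$ directly. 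Your computations check out (in particular the step $\Re(\bar\mu c)-\Re\mu=\Re[\mu(\bar c-1)]$ and the maximization are correct). What your approach buys: it is entirely elementary and pointwise --- no maximizing sequences, no parallelogram law, no weak compactness --- and it only uses the one-sided inequality $\Vert T+K\Vert\le\Vert I+K\Vert$ on this special family, so it actually proves a formally stronger statement. What the paper's approach buys: it consumes only a single compact perturbation per unit vector (rather than a circle's worth), at the cost of a compactness argument; both proofs work in any Hilbert space, separable or not.
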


\begin{proof}
Suppose that $T$ satisfies the assumption in the lemma. We clearly have $\Vert T\Vert=1$. Moreover, $2=\Vert I+P\Vert=\Vert T+P\Vert$ holds for any rank-one projection $P=x\otimes x$, $x\in H$ being an arbitrary unit vector. The equation $\Vert T+P\Vert=2$ implies that there exists a sequence $(y_n)$ of unit vectors in $H$ such that
		$$\Vert Ty_n+Py_n\Vert\to 2$$
as $n\to \infty$. Since $\Vert T\Vert=\Vert P\Vert=1$, we know that $\Vert Ty_n\Vert\le 1$ and $\Vert Py_n\Vert\le 1$ hold for all $n\in \N$. By the parallelogram identity, we infer
		$$\Vert Ty_n-Py_n\Vert^2+\Vert Ty_n+Py_n\Vert^2=2\Vert Ty_n\Vert^2+2\Vert Py_n\Vert^2.$$
Since the right hand side of this equation is less than or equal to $4$ and $\Vert Ty_n+Py_n\Vert^2\to 4$, we deduce that $\Vert Ty_n-Py_n\Vert\to 0$ and $\Vert Ty_n\Vert,\Vert Py_n\Vert\to 1$. It follows that
\begin{equation}\label{E:7}
Ty_n-Py_n=Ty_n-\langle y_n,x\rangle x\to 0
\end{equation}	
and, using $\Vert Ty_n\Vert \to 1$, we have $\vert\langle y_n,x\rangle\vert\to 1$. Because of the boundedness of the sequence $(y_n)$, it has a weakly convergent subsequence. Without loss of generality we may assume that already the original sequence $(y_n)$ is weakly convergent, $y_{n}\overset{w}{\to}z$ holds for some vector $z\in H$. Since $y_n$ is a unit vector for all $n$, we infer that $\| z\|\leq 1$ holds, too. From $\vert\langle y_n,x\rangle\vert\to 1$ we get that $|\langle z,x\rangle|=1$. Equality in Cauchy-Schwarz inequality implies linear dependence, hence we have $z=\varepsilon x$ for some complex number $\varepsilon$ of modulus 1. By \eqref{E:7}, we have $Ty_n \to \langle z,x\rangle x$. On the other hand, using $y_{n}\overset{w}{\to}z$, we also have $Ty_{n}\overset{w}{\to}Tz$. It follows that $Tz= \langle z,x\rangle x$ and, applying $z=\varepsilon x$, we conclude that $Tx=x$. Since $x$ was an arbitrary unit vector in $H$, we finally obtain that $T=I$.
\end{proof}

After these preliminaries we can now prove Theorem \ref{T:izom}.

\begin{proof}[Proof of Theorem \ref{T:izom}]
Let $\phi:\B \to \B$ be a map with the property that for any $A,B\in \B$ we have a pair $U_{A,B},V_{A,B}$ of either both unitary or both antiunitary operators on $H$ such that one of the following two equalities holds:
\begin{equation}\label{EE:1}
\begin{gathered}
\phi(A)-\phi(B)=U_{A,B}(A-B)V_{A,B}, \\
\phi(A)-\phi(B)=U_{A,B}(A-B)^*V_{A,B}.
\end{gathered}
\end{equation}
Clearly, without loss of generality we can assume that $\phi(0)=0$. By \eqref{EE:1}, it then follows that $\phi$ maps finite rank operators to finite rank operators and, in fact, $\phi$ preserves the rank. On the other hand, it also follows that $\phi$
preserves not only the operator norm distance but also the Hilbert-Schmidt norm distance on $\F$. The Hilbert-Schmidt norm originates from an inner product. In such spaces (even in any strictly convex space) isometries are automatically affine even without assuming their surjectivity, see \cite{Baker}. Since we have assumed $\phi(0)=0$, we have that $\phi$ is real-linear on $\F$ and it clearly maps rank-one operators to rank-one operators. We now apply Kuzma's theorem. Since our original map $\phi$ can be composed by the adjoint operation not affecting its local form \eqref{EE:1}, we may assume that there are a ring homomorphism $h:\C \to \C$ and $h$-quasilinear operators $A,B:H\to H$ such that $\phi(x\otimes y)=(Ax)\otimes (By)$ holds for all $x,y\in H$.
	
The real-linearity of $\phi$ on $\F$ easily implies that $h$ is the identity on the reals. It then follows that we have two possibilities: either $h(z)=z$ for all $z\in\C$ or $h(z)=\overline{z}$ for all $z\in\C$. We conclude that $A$ and $B$ are both linear or both conjugate-linear.
		
Our next aim is to show that $A$ and $B$ are either both unitary or both antiunitary. First observe that the injectivity of $\phi$ implies the injectivity of $A$ and $B$. Moreover, for any $x,y\in H$ we have
		$$\Vert x\Vert\Vert y\Vert=\Vert x \otimes y\Vert=\Vert\phi(x\otimes y)\Vert=\Vert Ax\otimes By\Vert=\Vert Ax\Vert\Vert By\Vert.$$
It follows that both $A,B$ are positive scalar multiplies of linear or conjugate-linear isometries and then we deduce that $A,B$ can be chosen to be both linear or both conjugate-linear isometries on $H$. We have
		\begin{equation*}
			\phi(x\otimes y)=A(x\otimes y)B^*,\quad x,y\in H.
		\end{equation*}
By the additivity of $\phi$ on $\F$ we obtain that
		\begin{equation*}
			\phi(F)=AFB^*, \quad F\in\F.
		\end{equation*}
Since $\phi$ is an isometry with respect to the distance coming from the operator norm and $\F$ is norm dense in $\K$, it follows that
		\begin{equation}\label{E:comp}
			\phi(K)=AKB^*,\quad K\in\K.
		\end{equation}
Select a complete orthonormal sequence $(e_n)$ in $H$ and consider the following compact operator
		$$K_0=\mathlarger{\mathlarger{\sum}\limits_{n\in\mathbb{N}}}\frac{1}{n}e_n\otimes e_n.$$
Clearly, $K_0$ is injective and has dense range. By the local form \eqref{EE:1} of $\phi$, the same is true for $\phi(K_0)$. On the other hand, by \eqref{E:comp}, we have
		\begin{equation*}
			\phi(K_0)=\mathlarger{\mathlarger{\sum\limits_{n\in\mathbb{N}}}}\frac{1}{n}Ae_n\otimes Be_n.
		\end{equation*}
We deduce that both sequences $(Ae_n)$, $(Be_n)$ generate dense subspaces in $H$ which means that $A,B$ are both unitaries or both antiunitaries.
	
After this, multiplying $\phi(.)$ by $A^*$ from the left and by $B$ from the right, we can clearly assume that $\phi$ is the identity on $\K$. In the last step of the proof we show that in that case $\phi$ equals the identity on the whole algebra $\B$, too.
To verify this, let $W$ be any unitary operator in $\B$, also let $\lambda\in\C$ and $K\in\K$ be arbitrary. Since $\phi$ is an isometry with respect to the metric of the operator norm, for every $K'\in\K$ the following equalities hold:
		$$\Vert\lambda W+(K-K')\Vert=\Vert\phi(\lambda W+K)-\phi(K')\Vert=\Vert\phi(\lambda W+K)-K'\Vert=\Vert\phi(\lambda W+K)-K+(K-K')\Vert.$$
In the case where $\lambda\neq 0$, this clearly implies
		$$\left\Vert I+\frac{1}{\lambda}W^*(K-K')\right\Vert=\left\Vert\frac{1}{\lambda}W^*(\phi(\lambda W+K)-K)+\frac{1}{\lambda}W^*(K-K')\right\Vert.$$
	As $K'$ runs through the whole set $\K$, the operator $(1/\lambda)W^*(K-K')$ also runs through it, so we can apply  Lemma \ref{L:id} and infer that
		$$I=\frac{1}{\lambda}W^*(\phi(\lambda W+K)-K).$$
This gives us
		$$\phi(\lambda W+K)=\lambda W+K,$$
(which trivially holds true also where $\lambda=0$) implying that $\phi$ acts as the identity on the operators of the form $\lambda W+K$, where $W,\lambda,K$ are as above. Using this, we can next prove that $\phi$ fixes the linear combinations of any two unitaries. Indeed, let $W,W'$ be unitary elements of $\B$ and $\lambda,\lambda'\in\C$ be arbitrary scalars. Then for every $K\in\K$ we have
		$$\Vert\lambda W+K\Vert=\Vert\phi(\lambda W+\lambda' W')-\phi(\lambda' W'-K)\Vert=\Vert\phi(\lambda W+\lambda' W')-\lambda' W'+K\Vert,$$
	and hence, assuming $\lambda\neq 0$, we infer
		$$\left\Vert I+\frac{1}{\lambda}W^*K\right\Vert=\left\Vert\frac{1}{\lambda}W^*(\phi(\lambda W+\lambda' W')-\lambda' W')+\frac{1}{\lambda}W^*K\right\Vert.$$
	Using the same reasoning as above, we deduce that
		$$I=\frac{1}{\lambda}W^*(\phi(\lambda W+\lambda' W')-\lambda' W'),$$
	which implies
		$$\phi(\lambda W+\lambda' W')=\lambda W+\lambda' W'$$
		and this holds true also when $\lambda=0$.
	One can continue with applying the above method and next derive that for any unitaries $W,W'\in\B$, complex numbers $\lambda,\lambda'\in\C$ and $K\in\K$ we have
		$$\phi(\lambda W+\lambda' W'+K)=\lambda W+\lambda' W'+K,$$
	and next that for any three unitaries $W_1,W_2,W_3\in \B$ and scalars $\lambda_1,\lambda_2,\lambda_3\in\C$ we have
		$$\phi(\lambda_1 W_1+\lambda_2 W_2+\lambda_3 W_3)=\lambda_1 W_1+\lambda_2 W_2+\lambda_3 W_3.$$
	In the last round we can prove that for any unitaries $W_1,W_2,W_3\in \B$, complex scalars $\lambda_1,\lambda_2,\lambda_3\in\C$ and $K\in\K$ we have
		$$\phi(\lambda_1 W_1+\lambda_2 W_2+\lambda_3 W_3+K)=\lambda_1 W_1+\lambda_2 W_2+\lambda_3 W_3+K,$$
	and finally that $\phi$ is fixing the linear combinations of any four unitaries in $\B$. But this exactly means that $\phi$ is the identity on the whole algebra $\B$ which finishes the proof of the theorem.
\end{proof}

After this, we can easily prove Theorem \ref{T:add}. Recall that any algebra *-automorphism of $\B$ is inner and implemented by a unitary element (see e.g., Theorem A.8 in \cite{M}).

\begin{proof}[Proof of Theorem \ref{T:add}]
Let $\phi:\B \to \B$ be a map which satisfies the requirements in the theorem.
It is apparent that for any $A\in \B$ there is a *-automorphism $\phi_{A,-A}$ of $\B$ such that $\phi(A)+\phi(-A)=\phi_{A,-A}(0)=0$. Hence we obtain that for any $A,B\in \B$, the equality
\[
\phi(A)-\phi(B)=\phi(A)+\phi(-B)=U_{A,B}(A-B)U_{A,B}^*
\]
holds with some unitary $U_{A,B}\in \B$. Therefore, the reformulation of Theorem \ref{T:izom} (see the statement above including the displayed formulas \eqref{E:ref1}, \eqref{E:ref2}), applies and, using also the easy fact that $\phi(0)=0$, we obtain that $\phi$ is of one of the forms
\[
\phi(A)=UAV, \quad A\in \B \quad \text{ or } \quad \phi(A)=UA^*V, \quad A\in \B,
\]
where either both $U,V$ are unitary or both of them are antiunitary operators on $H$. Since, from the original assumption on $\phi$ we see that $\phi(\lambda I)=\lambda I$ for every $\lambda \in \C$, it follows that we have either
\begin{equation*}\label{E:p1}
\phi(A)=UAU^*, \quad A\in \B
\end{equation*}
for a unitary operator $U$ on $H$, or we have
\begin{equation}\label{E:p2}
\phi(A)=UA^*U^*, \quad A\in \B
\end{equation}
for an antiunitary operator $U$ on $H$.

Assume now that $H$ is infinite dimensional. Since, by the original assumption on $\phi$, the operators
$\phi(A), A$ are unitarily similar for all $A\in \B$, it follows that for a unilateral shift $S$ on $H$, $\phi(S)$ is also a unilateral shift which immediately rules out the possibility \eqref{E:p2}. Consequently, it follows that $\phi$ is a *-automorphism.
Assume now that $H$ is finite dimensional. To treat this case we need to recall the following. For $n=2$, every $n$ by $n$ complex matrix is unitarily similar to its transpose but this is not true for any $n$ greater than 2. See 2.2.P3-2.2.P6 in \cite{HJ}. It then follows easily that if the dimension of $H$ is at least 3, the possibility \eqref{E:p2} is ruled out again, while in the 2-dimensional case it is not. The proof of Theorem \ref{T:add} is complete.
\end{proof}

We now turn to the proof of Theorem \ref{T:szor}. We will see that the argument is very different from the one in the proof of Theorem \ref{T:add}.

\begin{proof}[Proof of Theorem \ref{T:szor}]
Let $\phi:\B \to \B$ be a map with the property that
for any $A,B\in \B$ there is a unitary operator $U_{A,B}\in \B$ such that
\begin{equation}\label{E:20}
\phi(A)\phi(B)=U_{A,B}(AB)U_{A,B}^*.
\end{equation}
It is an immediate consequence of this property that $\phi(I)^2=I$, i.e., $S:=\phi(I)$ is an involution. Our first aim is to show that $S$ is self-adjoint.

To verify this, select an arbitrary rank-one (orthogonal) projection $P\in \B$. It follows from the property \eqref{E:20} that we have unit vectors $x,y,z\in H$ such that
		\begin{equation*}
			\phi(P)S=x\otimes x, \quad
			S\phi(P)=y\otimes y, \quad
			\phi(P)^2=z\otimes z.
		\end{equation*}
From the first two equations we deduce that
		\begin{equation*}
			\phi(P)=(x\otimes x)S=x\otimes (S^*x), \quad
			\phi(P)=S(y\otimes y)=(Sy)\otimes y.
		\end{equation*}
It follows that
		\begin{equation*}
\phi(P)^2=x\otimes (S^*x)\cdot(Sy)\otimes y=\langle Sy,S^*x\rangle x\otimes y.
		\end{equation*}
From the equality $z\otimes z=\phi(P)^2=\langle Sy,S^*x\rangle x\otimes y$
we get that the vectors $x,y,z$ are in the same 1-dimensional subspace. From
$x\otimes (S^*x)=\phi(P)=(Sy)\otimes y$ we see that $Sy$ is in the subspace generated by $x$ which equals the subspace generated by $y$. Therefore,
$Sy=\alpha y$ holds for some $\alpha\in\mathbb{C}$. Since
		$$\phi(P)^2=(Sy)\otimes y \cdot (Sy)\otimes y=\alpha^2(y\otimes y),$$
and $\phi(P)^2$ is a rank-one projection, we obtain that either $\alpha=1$ or $\alpha=-1$. This means that for any rank-one projection $P\in \B$, the operator $\phi(P)=\alpha (y\otimes y)$ is either a rank-one projection or its negative. We claim that this sign does not depend on the particular choice of $P$. Indeed, if we have two non-orthogonal rank-one projections $P_1$ and $P_2$ and two other rank-one projections $Q_1$ and $Q_2$ such that $\phi(P_1)=Q_1$ and $\phi(P_2)=-Q_2$, then applying \eqref{E:20} and using the trace functional $\tr$ we compute
		$$0<\tr P_1P_2=\tr \phi(P_1)\phi(P_2)=-\tr Q_1Q_2\leq 0,$$
which is a clear contradiction. If $P_1$ and $P_2$ are orthogonal, then we can choose a rank-one projection $P_3$ such that neither $P_1,P_3$ nor $P_3,P_2$ are  orthogonal and use the previous reasoning to verify our claim.
It follows that there is no serious loss of generality in assuming that for any rank-one projection $P\in \B$, the operator $\phi(P)$ is a rank-one projection (indeed, otherwise we consider the map $-\phi$). By \eqref{E:20}, we have
\begin{equation}\label{E:Wig}
\tr \phi(P)\phi(Q)=\tr PQ
\end{equation}
for any rank-one projections $P,Q$ on $H$. We next apply Wigner's famous theorem on quantum mechanical symmetry transformations which describes the structure of all self-maps of the set of all rank-one projections on $H$ with the property \eqref{E:Wig}, see, e.g., Theorem 2.1.4 in \cite{M}. It says that there is either a linear or a conjugate-linear isometry $J:H\to H$ such that
\begin{equation}\label{E:24}
\phi(P)=JPJ^*
\end{equation}
holds for every rank-one projection $P\in\mathcal{B}(H)$. Now, let $T\in\mathcal{B}(H)$ be an arbitrary self-adjoint operator. By the property \eqref{E:20}, there are self-adjoint operators $T_1,T_2\in\mathcal{B}(H)$ such that
		\begin{equation*}
			\phi(T)S=T_1, \quad
			S\phi(T)=T_2
		\end{equation*}
	which imply
		\begin{equation*}
			\phi(T)=T_1S, \quad
			\phi(T)=ST_2.
		\end{equation*}
We infer that
		$$\phi(T)^2=(T_1S)(ST_2)=T_1T_2.$$
On the other hand, by the property \eqref{E:20} again, $\phi(T)^2$ is clearly self-adjoint and hence we have that $T_1$ and $T_2$ commute. We then compute
		$$\phi(T)^2=(ST_2)(T_1S)=ST_1T_2S=S\phi(T)^2S$$
	and this gives us that
		$$S\phi(T)^2=\phi(T)^2S,$$
that is, $S$ and $\phi(T)^2$ also commute.
		Consider an orthonormal basis $(e_n)$ in $H$ and a strictly decreasing sequence $(\lambda_n)$ of positive real numbers converging to $0$. Define $T=\sum_n\lambda_ne_n\otimes e_n$. By \eqref{E:20}, $\phi(T)^2$ is of the following form:
		$$\phi(T)^2=\sum_{n=1}^\infty\lambda_n^2f_n\otimes f_n,$$
	where $(f_n)$ is also an orthonormal basis in $H$. As $S$ and $\phi(T)^2$ commute and the $\lambda_n^2$'s are all different, we easily obtain that $S$ commutes with each $f_n\otimes f_n$, $n\in \N$. As $\phi(I)=S$ is an involution, it has the form $S=I-2R$, where $R$ is an idempotent in $\mathcal{B}(H)$. It then follows that
		$$R\cdot f_n\otimes f_n=f_n\otimes f_n \cdot R.$$
We infer that for every $n\in\mathbb{N}$, there exists a scalar $\alpha_n\in\mathbb{C}$ such that
		$$Rf_n=\alpha_n f_n.$$
Applying $R$ on both sides, we get
		$Rf_n=\alpha_n^2 f_n$,
and it follows that $\alpha_n$ is either $0$ or $1$. Since $(f_n)$ is an orthonormal basis in $H$, we deduce that $R$ is an orthogonal projection and hence we obtain that $S$ is a self-adjoint involution. In particular, $\phi(I)=S$ is unitary.
	
	In the next step of the proof we will show that the image of any positive compact operator under $\phi$ is self-adjoint. Let $A\in\mathcal{B}(H)$ be of the form
		$$A=\sum_n \lambda_n e_n\otimes e_n,$$
where $(e_n)$ is an orthonormal basis in $H$ and $(\lambda_n)$ is a decreasing sequence of non-negative real numbers converging to zero. By the property \eqref{E:20}, there exist unitary operators $U,V,W\in\mathcal{B}(H)$ such that
		\begin{equation*}
			\phi(A)S=UAU^*, \quad
			S\phi(A)=VAV^*, \quad
			\phi(A)^2=WA^2W^*.
		\end{equation*}
	Let $B=WAW^*$. Then $A=W^* BW$, $\phi(A)^2=B^2$ and we have
		\begin{equation}\label{E:22}
			\phi(A)S=UW^*BWU^*, \quad
			S\phi(A)=VW^*BWV^*.
		\end{equation}
	Set $U'=UW^*$ and $V'=VW^*$. We obtain
		$$\phi(A)=U'BU'^*S=SV'BV'^*$$
	and
		$$B^2=\phi(A)^2=(U'BU'^*S)(SV'BV'^*)=U'BU'^*V'BV'^*.$$
Clearly, $B=\sum_n\lambda_n f_n\otimes f_n$ holds with an orthonormal basis $(f_n)$ in $H$. Consider the largest eigenvalue $\lambda_1$ of $B$ and the corresponding eigensubspace $M_1$. For any unit vector $x\in M_1$ we compute
$$
\begin{gathered}
		\| B\|^2=\Vert Bx\Vert^2=\langle B^2x,x\rangle=\langle U'BU'^*V'BV'^*x,x\rangle=\langle V'BV'^*x,U'BU'^*x\rangle\\\le\Vert V'BV'^*x\Vert\Vert U'BU'^*x\Vert=\|BV'^*x\|\|BU'^*x\|\leq \|B\|^2.
\end{gathered}	
$$
This gives us that
$$\|BV'^*x\|=\|B\|=\|BU'^*x\|.$$
Apparently, it follows that  $V'^*x,U'^*x\in M_1$.
Since $x\in M_1$ was an arbitrary unit vector in $M_1$, we have $V'^*(M_1),U'^*(M_1)\subset M_1$. In fact, because $M_1$ is finite dimensional, we actually obtain
$V'^*(M_1)=M_1=U'^*(M_1)$ and hence we also have
$V'(M_1)=M_1=U'(M_1)$. These imply that
		$$U'BU'^*\vert_{M_1}=V'BV'^*\vert_{M_1}=B\vert_{M_1}.$$	
Now, considering the orthogonal complement of $M_1$, restricting the operators $B, U'BU'^*, V'BV'^*$ to that subspace and repeating the previous argument, we obtain that $B, U'BU'^*, V'BV'^*$ coincide on the eigensubspace of $B$ corresponding to its second largest eigenvalue, and so forth. Therefore, we finally get that
		$$U'BU'^*=V'BV'^*=B.$$
	By \eqref{E:22}, this means that we have
\begin{equation}\label{E:23}
\phi(A)S=B,\quad  S\phi(A)=B.
\end{equation}	
	From this we deduce
		$S\phi(A)=\phi(A)S.$
	Since $S$ and $B$ are self-adjoint operators, using \eqref{E:23} we compute
		$$\phi(A)^*=(BS)^*=SB=\phi(A),$$
verifying that $\phi(A)$ is also self-adjoint.
	
	In the next step we show that on the set of positive Hilbert-Schmidt operators on $H$, $\phi$ is additive and positive homogeneous. It follows from \eqref{E:20} that $\phi$ sends Hilbert-Schmidt operators to Hilbert-Schmidt operators. Furthermore, if $A,B,\phi(A),\phi(B)\in \B$ are self-adjoint Hilbert-Schmidt operators, then we have $\langle \phi(A),\phi(B)\rangle_{HS}=\langle A,B\rangle_{HS}$, where $\langle .,.\rangle_{HS}$ denotes the Hilbert-Schmidt inner product. Now, for any positive Hilbert-Schmidt operators $A,B,C\in\mathcal{B}(H)$ and non-negative real number $\lambda$ we already know that $\phi(A+\lambda B),\phi(A),\phi(B)$ are self-adjoint and hence we can compute as follows
		\begin{equation*}
		\begin{gathered}
			\langle \phi(A+\lambda B)-(\phi(A)+\lambda\phi(B)),\phi(C)\rangle_{HS}=
		\langle \phi(A+\lambda B),\phi(C)\rangle_{HS}	-
			\langle \phi(A),\phi(C)\rangle_{HS}
			-\lambda\langle \phi(B)),\phi(C)\rangle_{HS}\\
			=\langle (A+\lambda B),C\rangle_{HS}	-
			\langle A,C\rangle_{HS}
			-\lambda \langle B,C\rangle_{HS}=0.
			\end{gathered}		
\end{equation*}
By the the real-linearity of the inner product in its second variable, it follows that
\[
\langle \phi(A+\lambda B)-(\phi(A)+\lambda\phi(B)),\phi(A+\lambda B)-(\phi(A)+\lambda\phi(B))\rangle_{HS}=0
\]
meaning that
\[
\phi(A+\lambda B)=\phi(A)+\lambda\phi(B)
\]
holds for any positive Hilbert-Schmidt operators $A,B\in\mathcal{B}(H)$ and non-negative real number $\lambda$. This gives us the additivity and positive homogeneity of $\phi$ on the set of all positive Hilbert-Schmidt operators on $H$.
	
	We already know that there exists a linear or conjugate-linear isometry $J:H\to H$ such that $\phi(P)=JPJ^*$ holds for every rank-one projection $P\in \B$, see \eqref{E:24}. Using what we have just proved above concerning the additivity and positive homogeneity of $\phi$, we can argue as follows. For an arbitrary orthonormal basis $(e_n)$ in $H$ and sequence $(\lambda_n)$ of non-negative real numbers which is square summable, we can compute
\begin{equation*}
\begin{gathered}
\phi\left(\sum_{n=1}^{\infty}\lambda_n e_n\otimes e_n\right)=\phi\left(\sum_{n=1}^{N}\lambda_n e_n\otimes e_n\right)+\phi\left(\sum_{n=N+1}^{\infty}\lambda_n e_n\otimes e_n\right),\\
\phi\left(\sum_{n=1}^{N}\lambda_n e_n\otimes e_n\right)=
\sum_{n=1}^{N}\lambda_n \phi(e_n\otimes e_n).
\end{gathered}
\end{equation*}
Applying \eqref{E:20} again and using the fact that $\phi(I)$ is unitary, it follows that
\begin{equation*}
\begin{gathered}
\left\Vert\phi\left(\sum_{n=1}^{\infty}\lambda_n e_n\otimes e_n\right)-\sum_{n=1}^{N}\lambda_n \phi(e_n\otimes e_n)\right\Vert=\left\Vert\phi\left(\sum_{n=N+1}^{\infty}\lambda_n e_n\otimes e_n\right)\right\Vert\\=\left\Vert\phi\left(\sum_{n=N+1}^{\infty}\lambda_n e_n\otimes e_n\right)\phi(I)\right\Vert=\left\Vert\sum_{n=N+1}^{\infty}\lambda_n e_n\otimes e_n\right\Vert\to 0
\end{gathered}
\end{equation*}
	as $N\to\infty$. Consequently, we have
\begin{equation*}
\begin{gathered}	
		\phi\left(\sum\limits_{n=1}^\infty\lambda_n e_n\otimes e_n \right)=\sum\limits_{n=0}^\infty\lambda_n\phi(e_n\otimes e_n)=\sum\limits_{n=0}^\infty\lambda_nJ (e_n\otimes e_n) J^*=J\left(\sum\limits_{n=0}^\infty\lambda_n e_n\otimes e_n\right)J^*.
		\end{gathered}
\end{equation*}
	By the property \eqref{E:20}, choosing nonzero $\lambda_n$'s we see that the operator $\phi(\sum_n \lambda_n e_n\otimes e_n )S$  has dense range. This implies that $\phi(\sum_n \lambda_n e_n\otimes e_n )$ has also dense range which ensures that our linear or conjugate-linear isometry $J$ has dense range, too. This implies that $J$ is either unitary or antiunitary.
	
To complete the proof, let $T\in \B$ be arbitrary. Pick any unit vector $x\in H$. Let $P=x\otimes x$. Using \eqref{E:20}, we have
\begin{equation*}
\tr J^*\phi(T)JP=\tr \phi(T)JPJ^*=\tr \phi(T)\phi(P)=\tr T P
\end{equation*}	
from which we obtain
\[
\langle J^*\phi(T)Jx, x\rangle =\langle Tx,x\rangle.
\]
Since $x$ was an arbitrary unit vector in $H$, it follows that $J^*\phi(T)J=T$ implying that $\phi(T)=JTJ^*$ holds for any $T\in \B$. In particular, it follows that $\phi(I)=I$. On the other hand, by \eqref{E:20}, we have
$\phi(iI)=\phi(iI)\phi(I)=iI$ and this implies that $J$ cannot be antiunitary, it is necessarily unitary.
This completes the proof of the theorem.
\end{proof}

We conclude the paper with the following. Firstly we remark that Peralta and his coauthors have recently considered another interesting generalization of the concept of 2-local maps that they called weak 2-locality, see, e.g., \cite{Cab, Li}. But still, their concept is, in some sense, closely related to the original one while ours here is very much different from that.

Let us look further and note that, as there have been serious investigations concerning 2-local automorphisms and 2-local isometries (2-local maps corresponding to the group of all linear isometries) of different algebras of operators and functions, it now seems to be a natural general problem to investigate questions  similar to the ones in the present paper in such algebras. The fact is that the first attempt has already been made, namely we refer to the recent preprint \cite{HatOi}. In that paper, motivated by the former general question (which was previously communicated to the authors), they have studied some function algebras and obtained results similar to our Theorem \ref{T:izom} for the algebra of all continuously
differentiable functions on the closed unit interval equipped with certain norms and also for the Banach
algebra of all Lipschitz functions on the closed unit interval with the sum-norm. At the end of their paper the authors have claimed that the analogous problem concerning the 'simplest' function algebra $C[0,1]$ seems to be really difficult. Sharing their claim, we think it kind of justifies our feeling that the general problem we have raised above may be an interesting direction of further research.

\bibliographystyle{amsplain}

\end{document}